\newcommand \datum {December 14, 2021}
\numberwithin{equation}{section}
\theoremstyle{plain}
 \newtheorem{theorem}{Theorem}[section]
 \newtheorem{corollary}[theorem]{Corollary}
\theoremstyle{definition}
 \newtheorem{definition}[theorem]{Definition}
 \newtheorem{convention}[theorem]{Convention}
 \newtheorem{example}[theorem]{Example}
\theoremstyle{remark}
 \newtheorem{case}{Case} 
 \newtheorem{rcase}{Case} 
\newenvironment{enumeratei}{\begin{enumerate}[\quad\upshape (i)]} {\end{enumerate}}
\newcommand\nothing [1] {}
\newcommand\bdia {$\mathcal C_1$}
\newcommand \ucov [1] {#1^{+}}
\newcommand \lcov [1] {#1^{-}}
\newcommand \Sn[1] {S_7^{(#1)}}
\newcommand \gleq {\leq_{\textup{geom}}}
\newcommand \gless {<_{\textup{geom}}}
\newcommand \ray[2]{\vec\ell_{#1#2}}
\newcommand \nwray[1]{\vec\ell(\mathord\nwarrow#1)}
\newcommand \neray[1]{\vec\ell(#1\mathord\nearrow)}
\newcommand \swray[1]{\vec\ell(\mathord\swarrow#1)}
\newcommand \seray[1]{\vec\ell(#1\mathord\searrow)}
\newcommand \abp {\textup{AP}}
\newcommand \upcone[1] {\textup{Cone}^{\textup{up}}(#1)}
\newcommand \dncone[1] {\textup{Cone}_{\textup{dn}}(#1)}
\newcommand \abul {A^\bullet}
\newcommand \xstar {X^\star}
\newcommand \spc  {\hskip 7 pt}
\newcommand \rslope[2] {#1\nearrow_\textup{\kern -4pt sl}#2}
\newcommand \lslope[2] {#2\mathrel{_\textup{sl}\kern-4pt\mathord\nwarrow}#1}
\newcommand \tbf[1]  {\textbf{#1}} 
\newcommand \Jir [1] {\textup J(#1)} 
\newcommand \Mir [1] {\textup M(#1)} 
\newcommand \Nplu {\mathbb N^+}
\newcommand \set [1]{\{#1\}}
\newcommand \lbound[1] {B_{\textup{left}}(#1)}
\newcommand \rbound[1] {B_{\textup{right}}(#1)}
\newcommand \restrict [2] {#1\rceil_{\kern -1pt #2}}
\newcommand \ideal [1] {\mathord{\downarrow}#1}
\newcommand \filter [1] {\mathord{\uparrow}#1}
\begin{document}
\title[Meets and retracts in slim semimodular lattices]
{A property of meets in slim semimodular lattices and its application to retracts}

\author[G.\ Cz\'edli]{G\'abor Cz\'edli}
\email{czedli@math.u-szeged.hu \qquad Address$:$~University of Szeged, Hungary}
\urladdr{http://www.math.u-szeged.hu/~czedli/}
\nothing{\address{ Bolyai Institute, University of Szeged, Hungary}}

\begin{abstract} Slim semimodular lattices were introduced by G.\ Gr\"atzer and E.\ Knapp in 2007, and they have intensively been studied since then. It is often reasonable to give these lattices by their  $\mathcal C_1$-diagrams defined by the author in 2017. We prove that if $x$ and $y$ are incomparable elements in such a lattice $L$, then the interval $[x\wedge y, x]$ is a chain and this chain is of a normal slope in every $\mathcal C_1$-diagram of $L$. Except possibly for $x$, the elements of this chain are meet-reducible.
If $A $ and $X$ are subsets of a lattice $K$, then a sublattice $S$ of a lattice $L$ has the absorption property $(K,A ,X)$ if for every embedding $g\colon K\to L$ such that $g(A)\subseteq S$, we have that $g(X)\subseteq S$. If there is an idempotent endomorphism $f: L\to L$ such that $S=f(L)$, then the sublattice $S$ is a retract of $L$. 
Applying the above-mentioned property of meets, we present two absorption properties that  the retracts of every slim semimodular lattice $L$ have. 
\end{abstract}

\thanks{This research was supported by the National Research, Development and Innovation Fund of Hungary under funding scheme K 134851.}

\subjclass {06C10}

\keywords{Slim semimodular lattice, planar semimodular lattice, rectangular lattice, retract, retraction, absorption property}

\date{\datum.\hfill{{Hint: check the author's website for preprints and possible updates}}}

\maketitle

\section{Introduction}\label{sect:intro} 
Slim semimodular lattices were introduced by G.\ Gr\"atzer and E.\ Knapp in 2007. These lattices can be defined in two equivalent ways. According to the original definition, they are finite, planar, semimodular lattices that contain no $M_3$-sublattices; $M_3$ denotes the five-element modular lattice with three atoms. However, we prefer to go after Cz\'edli and Schmidt~\cite{czgschtJH}, where a finite (note necessarily semimodular) lattice $L$ is \emph{slim} if the
set $\Jir L$ of its (nonzero) join-irreducible elements is the union of two chains. We know from \cite[Lemma 2.2]{czgschtJH} that slim lattices are planar. In our setting, 
\emph{slim semimodular} lattices are the slim and semimodular lattices.  

At the time of writing, four dozen publications have been devoted to slim semimodular lattices; see 
the (extended) arXiv version of Cz\'edli~\cite{czgrefFgl} for their list\footnote{or see \ 
\texttt{http://www.math.u-szeged.hu/\textasciitilde{}czedli/m/listak/publ-psml.pdf}}.
Many of these publications are freely available and explain what motivates the study of these lattices and what connections these lattices have with other parts of mathematics; we only refer to 
Cz\'edli and Kurusa \cite{czgkurusa} and the book chapter Cz\'edli and Gr\"atzer~\cite{czgggltsta} for surveys. 

For an algebra $A$, the idempotent endomorphisms $f\colon A\to A$ are called the \emph{retractions} of $A$. That is, a retraction is a homomorphism from $A$ to itself such that $f(f(x))=f(x)$ for all $x\in A$. The \emph{retracts} of $A$ are the images $f(A)=\set{f(x):x\in A}$ of $A$ under the retractions $f$ of $A$. Retracts are particular subalgebras.  
The concept of retracts is similarly defined for  other categories of structures. Retracts are particularly important, for example, for posets (partially ordered sets); we only mention Rival~\cite{rival} and Z\'adori~\cite{zadori}. Apart from some obvious cases like vector spaces over a field (where every subspace is a retract) and monounary algebras, whose retracts have nice properties by  Jakub\'{\i}kov\'a--Studenovsk\'a and P\'ocs~\cite{danicap}, we do not know much about  retracts in general. 

For lattices, retractions and retracts have already been investigated in some papers including Boyu~\cite{liboyu}, Cz\'edli~\cite{czg-slimpatchabsretr}, \cite{czgcomes}, and  Cz\'edli and Molkhasi~\cite{czgmolkhasi}, but we still know little about them. In particular, there are only few lattices the retracts of which are well understood.

\subsection*{Goal and outline}
In Section~\ref{sect:sps}, we formulate and prove Theorem~\ref{thmmain} on  meets in slim semimodular lattices and slim lattices. A part of this theorem is purely algebraic, but it also has a visual part based \bdia-diagrams introduced in Cz\'edli~\cite{czg-diagr}. The paper is intended to be self-contained for lattice theorists; the necessary details about these diagrams will be given in due course.

In Section~\ref{sect:propsps}, we apply  Theorem~\ref{thmmain} to prove that the retracts of slim semimodular lattices have two particular absorption properties.

\section{A property of meets in slim semimodular lattices}\label{sect:sps}

Before formulating the main result of the paper, we recall the concept of \bdia-diagrams. These diagrams together with even more specific diagrams were introduced in Cz\'edli~\cite{czg-diagr}, and they have already proved to be efficient tools to study slim semimodular lattices; see, for example, Cz\'edli~\cite{czglamps}.

\begin{definition} (A) We always assume that a classical coordinate system of the plane is fixed. Lines or line segments parallel to $\set{(x,x): x\in \mathbb R}$ and those parallel to  $\set{(x,-x): x\in \mathbb R}$ are of \emph{normal slopes} $1$ and $-1$, respectively. These two slopes are said to be \emph{orthogonal}. 
The angle they make with $\set{(x,0):  0\leq x\in\mathbb R}$ is $\pi/4$ ($45^\circ$) and $3\pi/4$ ($135^\circ$), respectively. Lines or edges making an angle $\alpha$ with $\set{(x,0):  0\leq x\in\mathbb R}$ such that $\pi/4 < \alpha < 3\pi/4$ are called \emph{precipitous}. For example, vertical lines are such.

(B) Let $L$ be a slim semimodular lattice; we always assume that a planar diagram of $L$ is fixed. The left boundary chain and the right boundary chain of $L$ are denoted by
$\lbound L$ and $\rbound L$, respectively. (Their dependence on the diagram will not cause any trouble since the diagram is fixed.)  The set of non-unit meet-irreducible elements of  $L$ is denoted by $\Mir L$. 

(C) The planar diagram of $L$ is a \emph{\bdia-diagram} if every edge $[a,b]$ such that $a\in \Mir L\setminus(\lbound L\cup \rbound L)$ is precipitous and all other edges are of normal slopes. 

(D) If an interval $[u,v]$ of $L$ is a chain such that the edges of this chain are of the same normal slope, then we say that \emph{the interval $[u,v]$ is of normal slope}; otherwise $[u,v]$ has no slope.
\end{definition}

The definition of \bdia-diagrams above is easier to read than that in \cite{czg-diagr}, where several other diagrams are also defined. For another variant of the definition, the reader can (but need not) see Gr\"atzer~\cite{ggC1diagr}.

We know from Cz\'edli~\cite[Theorem 5.5(ii)]{czg-diagr} that each slim semimodular lattice has a \bdia-diagram. This allows us to stick to the following convention.

\begin{convention}\label{conv:fxd}
From now on, we assume that every slim semimodular lattice occurring in the paper has a \emph{fixed}  \bdia-diagram.
\end{convention}

Based on Convention~\ref{conv:fxd}, we are in the position to formulate the main result of the paper; let us emphasize that semimodularity is only assumed in its second part.

\begin{theorem}\label{thmmain}
Let $a$ and $b$ be incomparable elements of a slim lattice $L$, and let $c:=a\wedge b$. Then the following hold.
\begin{enumeratei}
\item\label{thmmaina} The intervals $[c,a]$ and $[c,b]$ are chains. 
\item\label{thmmainb} If, in addition,  $L$ is a slim \emph{semimodular} lattice, then  the intervals $[c,a]$ and $[c,b]$ are of normal slopes, their slopes are orthogonal, and 
every element of $([c,a]\setminus \set a) \cup ([c,b]\setminus\set b)$ is meet-reducible.
\end{enumeratei}
\end{theorem}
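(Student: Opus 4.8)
The plan is to lean on slimness in the form $\Jir L=C_1\cup C_2$ for two chains, which makes every element a join of at most one join-irreducible from each chain. For $x\in L$ set $f_1(x):=\bigvee(\ideal x\cap C_1)$ and $f_2(x):=\bigvee(\ideal x\cap C_2)$ (an empty join being the least element of $L$); then $f_i(x)$ is the largest member of $C_i$ below $x$, and $x=f_1(x)\vee f_2(x)$. Since each $C_i$ is a chain, $x\le y$ iff $f_1(x)\le f_1(y)$ and $f_2(x)\le f_2(y)$, and $f_i(a\wedge b)=\min\{f_i(a),f_i(b)\}$. Incomparability of $a$ and $b$ forces $f_i(a)>f_i(b)$ for some $i$ and $f_j(b)>f_j(a)$ for some $j$, and these indices must differ (a single chain cannot witness both); so, after possibly swapping $C_1$ and $C_2$, I may assume $f_1(a)>f_1(b)$ and $f_2(b)>f_2(a)$. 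Consequently $f_1(c)=f_1(b)$ and $f_2(c)=f_2(a)=:s$.

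For part~(i) I observe that every $x\in[c,a]$ satisfies $s=f_2(c)\le f_2(x)\le f_2(a)=s$, so $f_2$ is constant on $[c,a]$ and $x=f_1(x)\vee s$. For $x,x'\in[c,a]$ the values $f_1(x),f_1(x')$ are comparable (both lie in the chain $C_1$, or equal the least element), whence $x$ and $x'$ are comparable; thus $[c,a]$ is a chain, and symmetrically so is $[c,b]$, with $f_1\equiv f_1(b)$ there. This part uses slimness only.

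For part~(ii) I first prove meet-reducibility, and here semimodularity enters. Let $b_1$ be the cover of $c$ inside the chain $[c,b]$; it is a genuine cover of $c$ in $L$ (anything strictly between would lie in $[c,b]$), and $b_1\not\le a$ since $b_1\le b$ while $b_1\le a$ would give $b_1\le a\wedge b=c<b_1$. Fix $x\in[c,a]\setminus\set a$. Applying semimodularity to $c\prec b_1$ gives $x=c\vee x\preceq b_1\vee x$, and $x<b_1\vee x$ because $b_1\not\le x$ (indeed $x\le a$ and $b_1\not\le a$); hence $b_1\vee x$ covers $x$ and is not below $a$. On the other hand $x<a$ supplies a cover $x'$ of $x$ with $x'\le a$. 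The two covers differ, so $x$ is meet-reducible; the case $[c,b]\setminus\set b$ is symmetric. In particular no element of $[c,a]\setminus\set a$ belongs to $\Mir L$, so by the definition of a \bdia-diagram no edge of the chain $[c,a]$ is precipitous, i.e.\ every such edge has a normal slope; likewise for $[c,b]$.

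It remains to see that these normal slopes are constant along each chain and orthogonal to each other, and this is the step I expect to be the main obstacle, since constancy is a genuinely geometric assertion not implied by meet-reducibility. Each edge of $[c,a]$ has $f_2$ constant and $f_1$ strictly increasing across it, while each edge of $[c,b]$ has the opposite behaviour; I would use this, the four-cell structure of slim semimodular lattices, and the left--right order of the fixed planar diagram to show (via the geometry of \bdia-diagrams developed in Cz\'edli~\cite{czg-diagr}) that all edges of the first kind carry one fixed normal slope and all edges of the second kind the other. Granting constancy, orthogonality is easy: the lowest edges $c\prec a_1$ of $[c,a]$ and $c\prec b_1$ of $[c,b]$ are distinct normal-slope edges running upward from the single vertex $c$, and two edges of equal normal slope issuing from $c$ would be collinear, forcing one of $a_1,b_1$ to lie in the interior of the straight segment to the other, which is impossible in a planar diagram. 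Hence the two slopes are $1$ and $-1$, as claimed.
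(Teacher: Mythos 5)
Your part (i) is correct and is essentially the paper's own argument: your $f_1,f_2$ are the paper's coordinates $i_x,j_x$ in a different notation, and the reduction (constancy of one coordinate on $[c,a]$, so that the interval is linearly ordered by the other) is the same. Your meet-reducibility argument in part (ii) is also correct, and it is genuinely different from, and more elementary than, the paper's: the paper obtains meet-reducibility only as a by-product of a long induction, whereas your direct use of semimodularity (from $c\prec b_1$ get $x\prec b_1\vee x$ with $b_1\vee x\not\le a$, and compare with a second cover of $x$ inside $[x,a]$) is short and purely algebraic. Likewise, your observation that meet-reducibility of the lower endpoints forces every \emph{individual} edge of $[c,a]$ and $[c,b]$ to be of a normal slope, by the very definition of a \bdia-diagram, is valid.

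However, the remaining step --- that the normal slope is \emph{constant} along each of the two chains --- is a genuine gap, and you have in effect conceded this yourself. Meet-reducibility of the lower endpoints is a local condition and cannot rule out an ``orthogonal turn'': in a grid, the chain $(0,0)\prec(1,0)\prec(1,1)$ consists of normal-slope edges all of whose lower endpoints are meet-reducible, yet its two edges have different slopes. What saves the theorem is that $[c,a]$ is a whole interval with $c=a\wedge b$, and exploiting this is a global geometric task; your plan to ``use the four-cell structure and the left--right order of the diagram, via the geometry of \bdia-diagrams'' is a statement of intent, not an argument, and no cited result hands you the constancy for free. This is exactly where the paper invests almost all of its effort: an induction along the structure theorem (every slim semimodular lattice together with its \bdia-diagram is obtained from a grid by multifork extensions at distributive $4$-cells followed by removals of corners), with a three-case analysis of how a \bdia-regular meet survives a multifork extension and a separate boundary-slope analysis for corner removal. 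Without this (or some substitute for it), the geometric heart of part (ii) remains unproven. Your orthogonality argument, \emph{granted} constancy, is fine: two upward edges of equal normal slope issuing from $c$ would be collinear, forcing one of $a_1,b_1$ into the interior of the segment from $c$ to the other, which a planar diagram forbids.
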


Without semimodularity, we cannot claim that the elements of  $([c,a]\setminus \set a) \cup ([c,b]\setminus\set b)$ in a slim lattice are meet-reducible. To see this, take the five-element nonmodular lattice $N_5$, which is slim, and let $a$ and $b$ be the coatoms of $N_5$.

\begin{proof}[Proof of Theorem \ref{thmmain}] 
Since $L$ is slim, $\Jir L$ is of the form $\Jir L=U\cup V$ where $U$ and $V$ are chains. We can assume that $L$ itself is not a chain. Replacing $V$ by $V\setminus U$ is necessary, we can assume that $U \cap V=\emptyset$. Note at this point that neither $U$ nor $V$ is empty since otherwise $L$ would be a chain. 
Since $U$ and $V$ are chains, we can write that 
$U=\set{u(1),u(2),\dots, u(m)}$ and $V=\set{v(1),\dots, v(n)}$ where $u(1)<u(2)<\dots<u(m)$ and $v(1)<v(2)<\dots<v(n)$.
For $x\in L$, let $i_x$ denote the largest (meaningful) subscript $s$ such that $u(s)\leq x$; if there is no such $s$ then $x_i:=0$. Similarly, $j_x$ stands for largest subscript $t$ such that $v(t)\leq x$; again, $x_j:=0$ if there is no such $t$.
Since each element is a join of join-irreducible elements, 
\begin{equation} x=u(i_x)\vee v(j_x),\,\,\,\text{ and }\,\,\,   x\leq y \iff (i_x\leq i_y\text{ and } j_x\leq j_y).\label{eq:smwHrqn}
\end{equation}
Observe that $u(s)\leq x\wedge y$ if and only if $u(s)\leq x$ and $u(s)\leq y$. The same holds for $v(t)$. Hence, it follows from \eqref{eq:smwHrqn}
 that, for any $x,y\in L$,
\begin{equation}
i_{x\wedge y}=\min\set{i_x,i_y}\quad\text{and}\quad j_{x\wedge y}=\min\set{j_x,j_y}.
\label{eq:mtsznK}
\end{equation} 
Now assume that $a,b\in L$ such that $a$ and $b$ are incomparable, in notation, $a\parallel b$.
Using \eqref{eq:smwHrqn}, we obtain that either $i_a<i_b$ and $j_a>j_b$, or $i_a>i_b$ and $j_a<j_b$. By symmetry, we can assume that $i_a<i_b$ and $j_a>j_b$. Let $c:=a\wedge b$. 
It follows from \eqref{eq:mtsznK} that $i_c=i_a$ and $j_c=j_b$. Now it is clear by \eqref{eq:smwHrqn} that for any $x\in 
[c,a]$, $i_x=i_a$. Therefore, $[c,a]$  is a chain by the first half of \eqref{eq:smwHrqn}. So is $[c,b]$ by symmetry.   This proves part \eqref{thmmaina}.

\begin{figure}[htb]
\centerline
{\includegraphics[width=\textwidth]{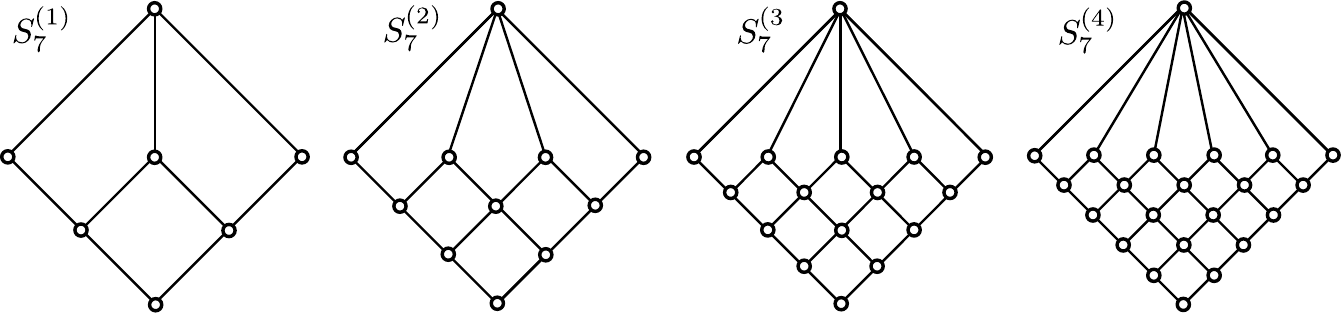}}      
\caption{$\Sn n$ for $n\in\set{1,2,3,4}$}\label{figsn}
\end{figure}

\begin{figure}[htb]
\centerline
{\includegraphics[width=\textwidth]{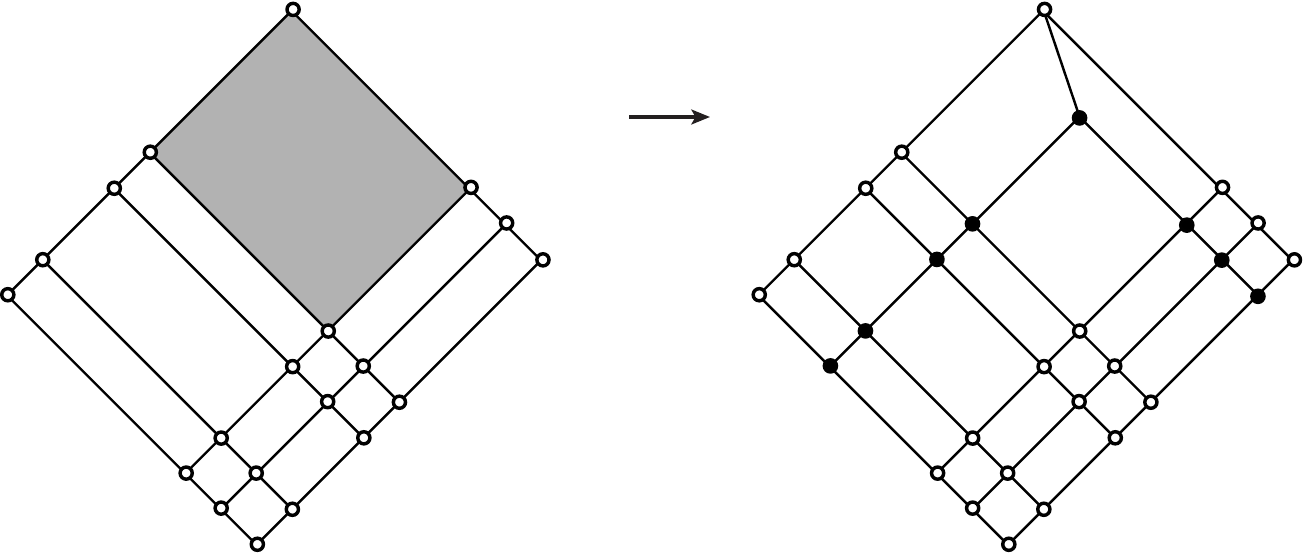}}      
\caption{A 1-fold multifork extension; in other words, a fork extension}\label{figmfa}
\end{figure}

\begin{figure}[htb]
\centerline
{\includegraphics[width=\textwidth]{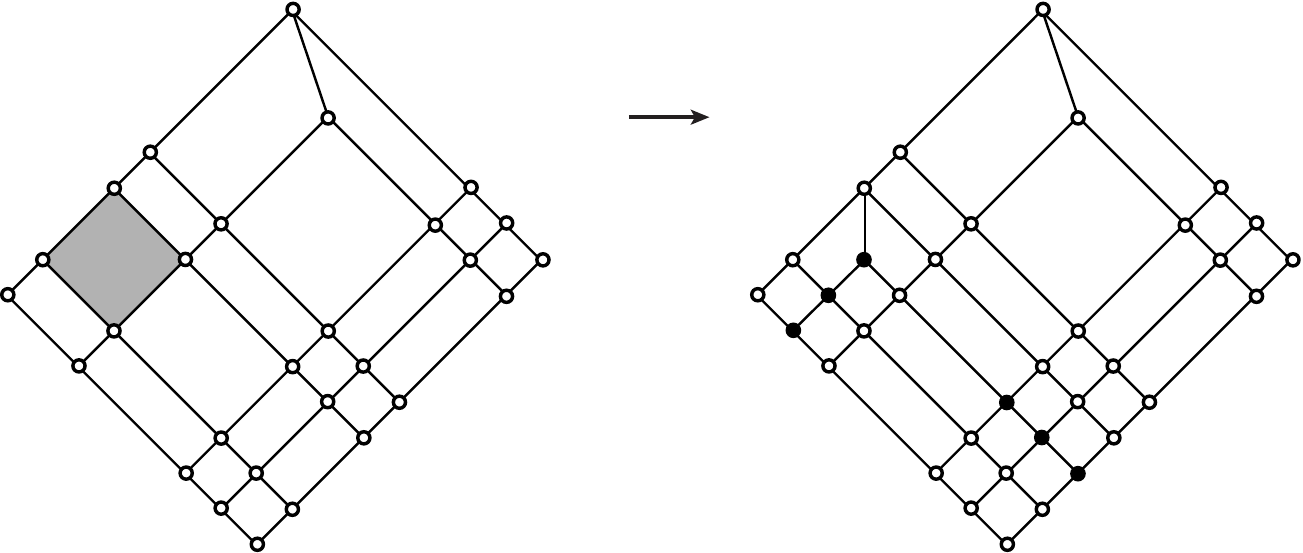}}      
\caption{Another 1-fold multifork extension}\label{figmfb}
\end{figure}

\begin{figure}[htb]
\centerline
{\includegraphics[width=\textwidth]{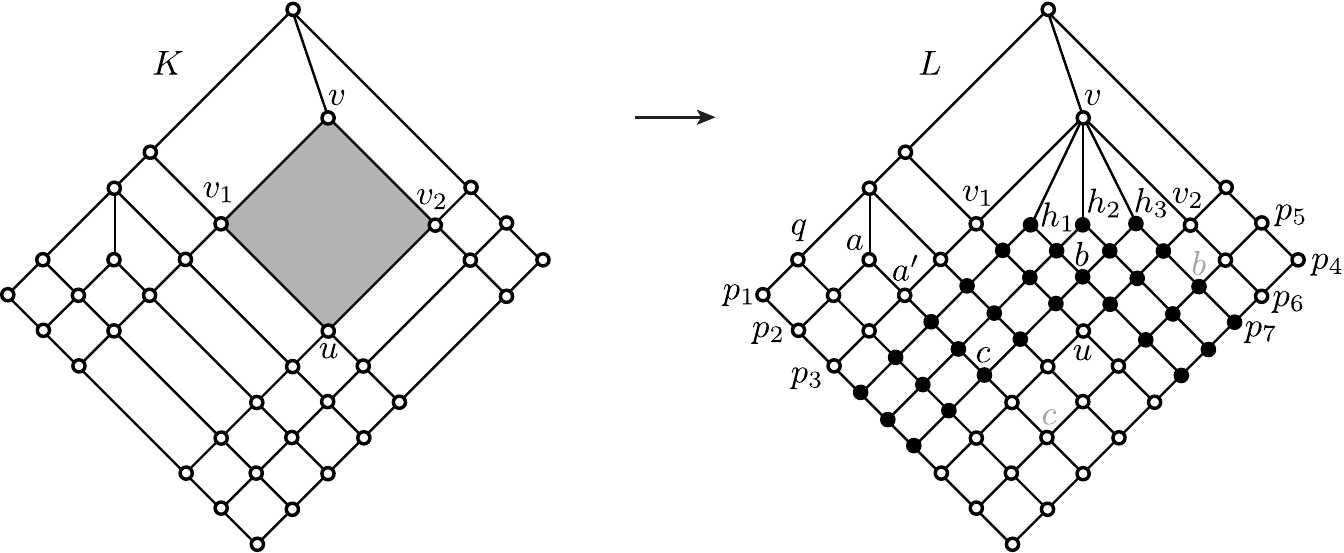}}      
\caption{A 3-fold multifork extension}\label{figmfc}
\end{figure}

To prove part \eqref{thmmainb}, we need to recall the structure theorem of slim semimodular lattices; it is  Cz\'edli~\cite[Theorem 3.7]{czg-patchext} (see also Cz\'edli \cite[Lemma 5.7]{czg-diagr} for diagrams) combined with Cz\'edli and Schmidt~\cite{czgschtvisual}. But first we need some concepts and notations. In the rest of the proof, let $L$ be a slim semimodular lattice. For $x\in \Jir L$ and $y\in \Mir L$, the unique lower cover of $x$ and the unique upper cover of $y$ are denoted by  $\lcov x$ and $\ucov x$, respectively. The elements of $\Jir L\cap\Mir L$ are called \emph{doubly irreducible}. By a \emph{corner} of a slim semimodular lattice $L$ we mean a doubly irreducible element  $u\in \lbound L\cup\rbound L$ such that $\ucov u$ has exactly two lower covers. 
For example, $L$ in Figure~\ref{figmfc} has exactly two corners, $p_1$ and $p_4$, but $q$ is neither a corner of $L$, nor  a corner of the sublattice $L\setminus\set{p_1,p_2,p_3}$. 
By a \emph{grid} we mean the direct product of two finite nonsingleton chains.
The diagram of $L$ is divided into so-called \emph{$4$-cells} by edges; these $4$-cells are four-element lattices and they are also intervals of length 2. The top element of a $4$-cell $X$ is denoted by $1_X$. We say that a $4$-cell $X$ is a \emph{distributive $4$-cell} if the principal ideal $\ideal 1_X:=\set{u\in L: u\leq 1_X}$ is a distributive lattice.
The lattices $\Sn n$ for $n=1,2,3,4$ are given in Figure \ref{figsn}; $\Sn n$ for $n\geq 5$ is analogously defined in Cz\'edli~\cite{czg-patchext}.
Figures~\ref{figmfa}--\ref{figmfc} show what a \emph{multifork extension}, introduced in \cite{czg-patchext}, is. Namely, 
\begin{itemize}
\item[---] first we choose an $n\in\Nplu:=\set{1,2,3,\dots}$ ($n$ is 1, 1, and 3 in Figures \ref{figmfa}, \ref{figmfb}, and \ref{figmfc}, respectively); in other words, two 1-fold multifork extensions are followed by a 3-fold one).
\item[---] Second, we pick a distributive 4-cell (the grey one on the left), which we change to a copy of $\Sn n$ (see on the right); the new elements that enter are called \emph{central} (they are $h_1$, $h_2$, $h_3$, $b$, and the two covers of $b$ in Figure~\ref{figmfc}), 
\item[---] Finally, proceeding to the lower left and the lower right directions, we add further elements, the so-called \emph{left new elements} and \emph{right new elements}, to keep semimodularity. 
\end{itemize}
In Figures \ref{figmfa}--\ref{figmfc}, the  new elements are the black-filled ones. As Figures~\ref{figmfa}--\ref{figmfc} show, $L$ in Figure~\ref{figmfc} is obtained from a grid by three consecutive multifork extensions. Now the structure theorem of slim semimodular lattices asserts the following.
\begin{equation}\left.
\parbox{9.5cm}{Each slim semimodular lattice and its \bdia-diagram can be obtained such that we take a grid, perform multifork extensions at distributive $4$-cells in a finite number of steps, and then we omit some corners one by one. Furthermore, any lattice obtained in this was is a slim semimodular lattice.
}\,\,\right\}
\label{pbx:strthm}
\end{equation}
It is important that multifork extensions in \eqref{pbx:strthm} are only allowed \emph{before} omitting corners. 
It is allowed to perform no multifork extension or  omit no corner. 

For distinct elements $b$ and $c$ of  a slim semimodular lattice $L$, keeping Convention~\ref{conv:fxd} in mind, let $\ray bc$ denote 
the ray (also called half-line) through $c$ with initial point $b$; $\ray bc$ is a geometric object. We write $b\gless c$ to denote that $b\neq c$ and the angle between the positive half of the $x$-axis and  $\ray bc$ is in the interval $[\pi/4, 3\pi/4]$ (that is, between $45^\circ$ and $135^\circ$). Naturally, $b\gleq c$ means that $b\gless c$ or $b=c$. For $b\in L$, we define the \emph{upper cone} and the \emph{lower cone} of $b$ as the following subsets of the plane:
\[\upcone b:=\set{c\in\mathbb R^2: b\gleq c}\text{ and } 
\dncone b:=\set{c\in\mathbb R^2: c\gleq b}. 
\]
We know from Cz\'edli~\cite[Corollary 6.1]{czg-diagr} that, for any $x,y\in L$, 
\begin{equation}
x\leq y \iff x\gleq y \iff x\in\dncone y\iff y\in\upcone x.
\label{eq:kszBfhtlR}
\end{equation}


Next, we recall a concept from Kelly and Rival~\cite{kellyrival}. For $u,v\in L$ such that $u\parallel v$, we say that $u$ is \emph{to the left of} $v$ if $u$ is on the left of some (equivalently, every) maximal chain containing $v$. 
We know from Kelly and Rival~\cite[Lemma 1.2 and Proposition 1.7]{kellyrival}
that, for all elements $u,v$ in any planar lattice,
\begin{align}
\parbox{10cm}{if $u\parallel v$, 
then $u$ is to the left of $v$ or $v$ is to the left of $u$, and}
\label{eqtxt:lfmrHv}\\
\parbox{10cm}{if $C$ is a maximal chain of $L$, $u$ is on the left while $v$  is on the right of $C$, and $u<v$, then $u\leq w\leq v$ holds for some $w\in C$.}
\end{align}
Now we are in the position to prove part \eqref{thmmainb} of the theorem. In virtue of  \eqref{pbx:strthm}, it suffices to show that
\begin{itemize}
\item[(a)] \eqref{thmmainb} holds for every grid,
\item[(b)] if \eqref{thmmainb} holds for a slim semimodular lattice $K$ and $L$ is obtained from $K$ by a multifork extension at a distributive $4$-cell, then \eqref{thmmainb} also holds for $L$, and
\item[(c)] if \eqref{thmmainb} holds for a slim semimodular lattice $K$ and $L$ is obtained from $K$ by omitting a corner, then  \eqref{thmmainb} holds for $L$, too.
\end{itemize}

The validity of (a) is trivial. 

Assume that $K$ and $L$ are as described in (b) and  \eqref{thmmainb} holds for $K$; see Figure~\ref{figmfc} where $L$ is obtained from $K$ by a 3-fold multifork extension. The new elements, that is, the elements of $L\setminus K$ are the black-filled ones. From left to right, the new meet-irreducible elements are labeled by $h_1$, \dots, $h_n=h_3$. Since the multifork extension is performed at a distributive 4-cell with top element denoted by $v$,  the principal ideals $\ideal_K v$, $\ideal_L h_1$, \dots, $\ideal_L h_n$ are grids.  For convenience, 
\begin{equation}
\parbox{9cm}{if $x\parallel y$, $z=x\wedge y$, and part \eqref{thmmainb} holds if we let $(a,b,c):=(x,y,z)$, then we say that $z=x\wedge y$ is a \emph{\bdia-regular meet}.}
\label{pbx:rmT}
\end{equation}
Let $a$ and $b$ be incomparable elements of $L$, and let $c:=a\wedge b$. We have to show that $c=a\wedge b$ is a \bdia-regular meet. There are several cases.

\begin{case}\label{case:egy} $a,b\in K$. Then $c\in K$, the chain $C:=[c,a]_K$ of $K$ is of a normal slope, and its elements except possibly $a$ are meet-reducible. The edges (as line segments) of $C$ can be divided into shorter edges by some new elements $x_1,\dots,x_t$  in $L$, but $C$ as a line segment and so its  slope remain the same. It is clear by construction and Figures~\ref{figmfa}--\ref{figmfc} that none of $h_1,\dots, h_n$ can divide an old edge. Hence $\set{x_1,\dots,x_t}\cap\set{h_1,\dots,h_n}=\emptyset$. Since all  the new elements but $h_1,\dots,h_n$ are meet-reducible in $L$, so are $x_1,\dots,x_n$. Since a meet-reducible element of $K$ is also meet-reducible in $L$ and $a$ and $b$ play a symmetrical role , we conclude that $c=a\wedge b$ is a \bdia-regular meet.
\end{case}

\begin{case}\label{case:ketto} $a,b\in L\setminus K$: since
$\ideal_K v$ is distributive, it is a grid. Hence, 
 $\ideal_L h_1$, \dots, $\ideal_L h_n$ are also grids; see Figure~\ref{figmfc}. Let $T:=\ideal_L h_1\cup \dots \cup\ideal_L h_n$.
Then $T$ is a meet-subsemilattice and a subdiagram of a larger grid $G$. This $G$  is not a subset of $L$ but no problem: $c=a\wedge b$ is a \bdia-regular meet in $G$ by (a), and $T$ is a common meet-subsemilattice of $L$ and $G$. Since $h_1$, \dots, $h_n$ are maximal elements in $T$, they are not in $([c,a]\setminus\set a)\cup ([c,b]\setminus\set b)$, and it follows easily that $c=a\wedge b$ is a \bdia-regular meet in $L$. 
\end{case}

\begin{case}\label{case:harom} $a\in K$ and $b\in L\setminus K$. 
Two possible positions of $b$ and $c:=a\wedge b$ are given in Figure~\ref{figmfc}, one with black letters and another one with light-grey letters. 
Since $b$ is a new element, $b<v$. 
Denote by $v_1$ and $v_2$ the lower covers of $v$ in $K$; see Figure~\ref{figmfc}.  Using \eqref{eq:kszBfhtlR} and reflecting the diagram across a vertical axis if necessary, we can assume that $a$ is to the left of $b$. (Then the situation complies with the figure.)  Since $b < v$ and $a\parallel b$, we have that $v\not\leq a$. Hence,  there are two subcases.

In the first subcase, we assume that $a<v$. Observe that
\begin{equation}
\parbox{8cm}{$\ideal_L v\setminus\set v=\ideal_L v_1\cup \ideal_L v_2\cup \ideal_L h_1\cup \dots\cup\ideal_L h_n$, and every principal ideal on the right of the equality sign is a chain with a normal slope or a grid.}
\end{equation}
As in Case~\ref{case:ketto}, we can extend $T:=\ideal_L v\setminus \set v$ to a grid $G$ such that $T$ is a common meet-subsemilattice of $G$ and $L$. Then we can conclude
the \bdia-regularity of $c=a\wedge b$ basically in the same way as in Case~\ref{case:ketto}.

In the second subcase, we assume that $a\parallel v$. Since $b < v$ gives the existence of a maximal chain containing both $b$ and $v$, we obtain that $a$ is to the left of $v$. Let $a':=a\wedge v$, and observe that  $a'\wedge b=(a\wedge v)\wedge b= a\wedge (v\wedge b)=a\wedge b=c$.
This fact, $a'<v$, and the previous subcase yield that $c=a'\wedge b$ is a \bdia-regular meet. In particular, the interval $[c,b]$ has the required properties. So does $[c,a']$, but it is only a part of $[c,a]$.  We know from the already proven part \eqref{thmmaina} of the theorem that $[c,a]$ is an interval. Thus, $a'$ is comparable with all elements of $[c,a]$, and it follows that $[c,a]$ is the union of $[c,a']$ and $[a',a]$. By its definition, $a'$ is meet-reducible. Since part \eqref{thmmainb} holds for $K$ and $a'=a\wedge v$, the intervals $[a',a]$ and $[a',v]$ are of orthogonal normal slopes. We have seen that both $[a',a]$  and $[c,a']$ are of normal slopes, but we have to verify that they are of the same normal slope. Since the slope of $[a',a]$ is orthogonal to that of $[a',v]$, it suffices to show that so is the slope of $[c,a']$. Suppose not. Then $[c,a']$ and $[a',v]$ is of the same normal slope. Hence, $c$ lies on the geometrical line of a normal slope through $a'$ and $v$.
(So, as opposed to what Figure~\ref{figmfc} shows, $c$ is on the line segment from $p_3$ to $v$.) Therefore, by the definition of multifork extensions, the principal filter $\filter_L c$ cannot contain any new element. This contradicts $b\in \filter_L c$ and concludes Case~\ref{case:harom}.  
\end{case}

Cases \ref{case:egy}--\ref{case:harom} yield the validity of 
(b).

To prove (c), assume that $w$ is a corner of $K$ and $L=K\setminus \set w$. Let $a,b\in L$ such that $a\parallel b$. Then $c:=a\wedge b$ is a \bdia-regular meet in $K$. Note that $w\notin\set{a,b,c}\subseteq L$. 
We know that $[c,a]_K$ and $[c,b]_K$ are chains of normal slopes. 
To see that $[c,a]_L=[c,a]_K$, it suffices to show that $w\notin[c,a]_K$.  But this is obvious since otherwise $[c,a]_K$ would make an ``orthogonal turn" at $w$ and it could not be of a normal slope in $K$. Similarly, $[c,b]_L=[c,b]_K$. 

The only meet-reducible element of $K$ that turns into meet-irreducible in $L$ is $x_0:=\lcov w$. Hence, we need to show that none of  $[c,a]_K\setminus\set a$ and $[c,b]_K\setminus\set b$ contains $x_0$. By symmetry, it suffices to deal with $[c,a]_K\setminus\set a$, and we can assume that  $w\in\lbound K$. For the sake of contradiction, suppose that $c\leq x_0<a$. For convenience, we introduce the following notation.
For an edge $[p,q]$, let $\rslope p q$ and $\lslope p q$ mean the $[p,q]$ is of slope 1  ($45^\circ$) or slope $-1$ ($135^\circ$), respectively.
 Let $x_1$ denote the cover of $x_0$ that is distinct of $w$. Since we are in a \bdia-diagram, 
both $[x_0,w]$ and $[x_0,x_1]$ are of normal slopes and these slopes are different. Using that $w\in\lbound L$, the only possibility is that $\lslope{x_0}w$ and $\rslope{x_0}{x_1}$.
Every element has at most two covers; this was proved by Gr\"atzer and Knapp~\cite[Lemma 8]{gratzerknapp1} (and also follows from the definition of  \bdia-diagrams). Therefore, since $w\notin [c,a]_K$ and $c\leq x_0<a$, we have that $x_1\in[c,a]_K$. 
From the facts that $[x_0,x_1]$ is an edge of the chain $[c,a]_K$, this chain is of a normal slope, and $\rslope{x_0}{x_1}$, we obtain that 
\begin{equation}
\text{for every edge $p\prec q$ of $[c,a]_K$, we have that $\rslope p q$.}
\label{eq:wmRhrs}
\end{equation} 
We claim that 
\begin{equation}
\text{if $p\prec q$ is an edge of $[c,a]_K$ and $q\in\lbound K$, then $p\in\lbound K\cap\Mir K$.}
\label{eq:msrTfjn}
\end{equation}
Suppose to the contrary that $p\notin \lbound K$. But $\lbound K$ is a maximal chain of $K$, whereby there is a unique $r\in \lbound K$  such that $r\prec q$. Since $r$ and $p$ are different lower covers of $q$, they are incomparable and one of them is to the left of the other by \eqref{eqtxt:lfmrHv}. Since $r$ is on the left boundary chain $\lbound K$, $p$ cannot be to left of $r$.
Hence, $r$ is to the left of $p$.  We know from \eqref{eq:wmRhrs} that $\rslope p q$. In other words, the angle that the edge $p\prec q$ and $\set{(x,0):  0\leq x\in\mathbb R}$ makes is $45^\circ$. Since $r$ is to the left of $q$, the edge $r\prec q$ makes a smaller angle. This contradicts the fact that we are in a \bdia-diagram. This proves that $p\in\lbound K$. 

To obtain a contradiction again, suppose that $q$ is not the only cover of $p$, and pick a cover $s$ of $p$ such that $s\neq q$. Since $q\parallel s$ and $q\in\lbound K$, we have that $q$ is to the left of $s$. Hence $[p,s]$ makes a smaller angle with $\set{(x,0):  0\leq x\in\mathbb R}$  than $[p,q]$. But this is a contradiction since, by \eqref{eq:wmRhrs},  $[p,q]$ makes the smallest possible angle, $45^\circ$. Therefore, $p\in\Mir K$, proving  \eqref{eq:msrTfjn}.  

Let $y$ denote the unique cover of $c$ that belongs to the chain $[c,a]_K$. Since $x_0\in\Mir L$ but $c\notin\Mir L$,  we obtain that $c<x_0\in[c,a]_K$. Hence $y\leq x_0$. 
Descending along the chain $[c,a]_K$ from $x_0\in\lbound K\cap\Mir K$ down to $y$, we obtain by \eqref{eq:wmRhrs} and \eqref{eq:msrTfjn} that $c\in\Mir K$. This is a contradiction since the meet-reducibility of $c$ in $L$ implies its meet-reducibility in $K$.
Therefore, (c) holds, and the proof of Theorem~\ref{thmmain} is complete.
\end{proof}

\section{Two properties of retracts of slim semimodular lattices}\label{sect:propsps}
We introduce a class of properties by defining which lattices satisfy them. As usual, for a set $Y$ and a map $h$, the set $\set{h(y):y\in Y}$ is denoted by $h(Y)$.

\begin{definition}\label{def:pRpC}
Assume that $K$ is a lattice, $\abul$ is a sublattice  of $K$, and $\xstar$ is a nonempty subset of $K$. (If we want to avoid redundancy and triviality, we also assume that $\abul\neq K$ and $\abul\cap\xstar=\emptyset$.) Let $L$ also be a lattice.

\begin{itemize}
\item[$\square$] A sublattice $S$ of $L$ satisfies
the \emph{absorption property} $\abp(K,\abul,\xstar)$ in $L$ if for every embedding $g\colon K\to L$, the inclusion $g(\abul)\subseteq S$ implies that $g(\xstar)\subseteq S$. 

\item[$\square$] If \emph{every} retract of $L$ satisfies $\abp(K,\abul,\xstar)$ in $L$, then we say that the \emph{retracts} of $L$ satisfy
the absorption property $\abp(K,\abul,\xstar)$.

\item[$\square$] In this paper, $\abp(K,\abul,\xstar)$ is always given by a single diagram: the diagram of $K$ in which the elements of $\abul$ are drawn by large black-filled circles while $\xstar$ is the set of star-shaped elements.
\end{itemize}
\end{definition}

For example, property $P(8,1)$ given in Figure~\ref{figprt} is the condition $\abp(K,\abul,\xstar)$ where $K=\set{a,b,c,d,x,y,z,t}$ is the lattice given in the figure, $\abul=\set{a,b,c,d}$, and $\xstar=\set{y}$. Using Theorem~\ref{thmmain}, we are going to prove the following two corollaries; $P(8,1)$ and $P(9,2)$ are given by Figure~\ref{figprt} and Definition~\ref{def:pRpC}.

\begin{figure}[htb]
\centerline
{\includegraphics[scale=1.0]{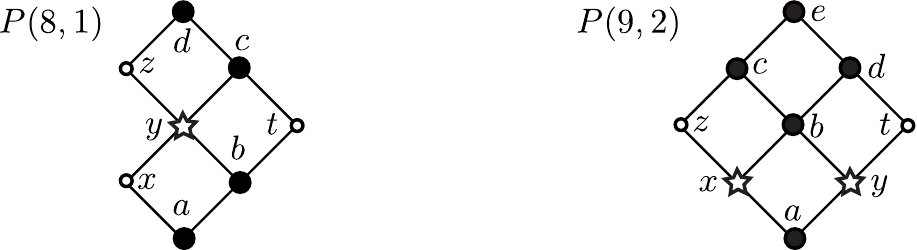}}     
\caption{Absorption properties $P(8,1)$ and $P(9,2)$, which hold for slim semimodular lattices
}\label{figprt}
\end{figure}

\begin{corollary}\label{cor:p81} The retracts of every slim semimodular lattice satisfy $P(8,1)$.
\end{corollary}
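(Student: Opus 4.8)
The plan is to unfold the definition of a retract, reduce the corollary to a single fixed-point equality, and then feed the geometric content of Theorem~\ref{thmmain} into it. So let $L$ be a slim semimodular lattice and let $S=f(L)$ be a retract, where $f\colon L\to L$ is an idempotent endomorphism; recall that $f$ acts as the identity on $S$ and that $w\in S$ if and only if $f(w)=w$. Let $g\colon K\to L$ be an embedding with $g(\abul)\subseteq S$, and abbreviate the images $g(a),\dots,g(y)$ by $a',\dots,y'$. Since $S$ is a sublattice containing the four black images, the only thing to prove is that $y'\in S$, that is, $f(y')=y'$. Equivalently, writing $h:=f\circ g\colon K\to L$, which is a lattice homomorphism agreeing with the embedding $g$ on $\set{a,b,c,d}$, it suffices to show $h(y)=g(y)$; thus the entire content is that any endomorphism of $L$ fixing the four black images must also fix $y'$.

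Next I would read off from the diagram of $K$ in Figure~\ref{figprt} the lattice identities that hold among $a,b,c,d,y$ and transport them to $L$ through $g$. The pertinent relations are that $a$ and $b$ are incomparable with $c=a\wedge b$ and $d=a\vee b$, that $c<y<a$, and that $y$ is a relative complement of $b$ in $[c,d]$, i.e.\ $y\wedge b=c$ and $y\vee b=d$. Because $g$ is an embedding, all of these survive verbatim for the primed elements. Now part~\eqref{thmmaina} of Theorem~\ref{thmmain} applies to the incomparable pair $a',b'$: the interval $[c',a']$ is a chain, and by part~\eqref{thmmainb} it is of a normal slope and every element of $[c',a']\setminus\set{a'}$, in particular $y'$, is meet-reducible. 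Since $f$ is order-preserving and fixes $c'$ and $a'$, we get $f(y')\in[c',a']$; applying $f$ to the two transported identities and using $f(b')=b'$, $f(c')=c'$, $f(d')=d'$ yields $f(y')\wedge b'=c'$ and $f(y')\vee b'=d'$. Hence $f(y')$ and $y'$ are two relative complements of $b'$ in $[c',d']$ that both lie on the single chain $[c',a']$, and so they are comparable.

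It then remains to prove that such a relative complement is unique along $[c',a']$, which forces $f(y')=y'$ and therefore $y'\in S$. This uniqueness is exactly where slim semimodularity is indispensable, and I expect it to be the main obstacle: in a general lattice a chain can carry two distinct complements of a fixed element, which is precisely why $\abp(K,\abul,\xstar)$ can fail outside our class. Concretely, I would show that the join map $t\mapsto t\vee b'$ is injective on the normal-slope chain $[c',a']$, so that $y'$ is the unique element of that chain whose join with $b'$ reaches $d'$ while its meet with $b'$ stays at $c'$. The available inputs are the orthogonality of the normal slopes of $[c',a']$ and $[c',b']$ and the meet-reducibility of the interior chain elements from part~\eqref{thmmainb}, interpreted geometrically through \eqref{eq:kszBfhtlR}; one subtlety to watch is that inside $L$ the chain $[c',a']$ may be a refinement of the one visible in $g(K)$, so the argument must control all intermediate elements, not merely images of $K$. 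I anticipate that the cleanest route is to localize to $[c',d']$ and reduce it to a grid, where relative complements are unique, by the same device of enlarging a meet-subsemilattice to a grid that is used in Cases~\ref{case:ketto} and~\ref{case:harom} of the proof of Theorem~\ref{thmmain}, invoking the left–right lemma \eqref{eqtxt:lfmrHv} to locate $b'$ on the correct side. Once $y'$ is pinned in this way, $y'=f(y')\in S$, which is what $P(8,1)$ demands.
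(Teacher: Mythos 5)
Your argument does not prove Corollary~\ref{cor:p81}, for two independent reasons. First, it is built on a misreading of the lattice $K$ that defines $P(8,1)$. In Figure~\ref{figprt} the black-filled set $\abul=\set{a,b,c,d}$ is a \emph{chain} $a<b<c<d$, the starred element satisfies $b<y<c$, and the relations that matter are $a=x\wedge b=x\wedge t$, $b=z\wedge t$, $y=x\vee b=z\wedge c$, $c=x\vee t$, and $d=z\vee t$ (these are exactly the identities the paper's proof invokes; cf.\ \eqref{eq:smrRm}). Your proposal instead posits $a\parallel b$, $c=a\wedge b$, $d=a\vee b$, and $c<y<a$, which is a different eight-element lattice and therefore a different absorption property. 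In the true $K$ the pair $(a',b')$ is comparable, so Theorem~\ref{thmmain} cannot be applied to it at all, and your central objects --- the chain $[c',a']$ and the relative complement of $b'$ in $[c',d']$ --- have no counterpart in the statement to be proved.

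Second, even granting your version of $K$, the step you yourself identify as the main obstacle is not only missing but false at the level of generality at which you hope to prove it. Uniqueness of relative complements along a normal-slope chain fails in slim semimodular lattices: in $S_7=\Sn{1}$, with covers $o\prec p$, $o\prec q$, $p\prec l$, $p\prec m$, $q\prec m$, $q\prec r$, and $l,m,r\prec t$, the chain $[o,l]=\set{o,p,l}$ lies on the left boundary and is of normal slope in every \bdia-diagram, yet $p\wedge r=l\wedge r=o$ and $p\vee r=l\vee r=t$. Thus $w\mapsto w\vee r$ is not injective on a normal-slope chain, and two comparable relative complements of $r$ coexist on it. Your fallback of localizing to $[c',d']$ and reducing to a grid cannot work either: $[o,t]=S_7$ is not modular, while grids are distributive, so no such meet- and join-preserving reduction exists. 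The moral is that order-geometric information about $[c',a']$ and $b'$ alone can never pin down $y'$; one must exploit the retraction itself and the images of the non-black elements $x,z,t$. That is exactly what the paper does through its key lemma \eqref{pbx:nSkmt} (which restricts, via cones, where $u'$ and $v'$ can lie when $u\wedge v\in S$) together with a case analysis on whether $z'\parallel t'$ and whether $x'\parallel b$; nothing in your outline plays this role.
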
 

\begin{corollary}\label{cor:p92} The retracts of every slim semimodular lattice satisfy $P(9,2)$.
\end{corollary}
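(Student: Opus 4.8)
The plan is to unwind Definition~\ref{def:pRpC} and convert the assertion into a statement about fixed points of the retraction. Let $L$ be a slim semimodular lattice, let $f\colon L\to L$ be a retraction with $S=f(L)$, and let $K$ be the nine-element lattice that defines $P(9,2)$ in Figure~\ref{figprt}, with black set $\abul$ and star set $\xstar=\set{y_1,y_2}$. Given an embedding $g\colon K\to L$ with $g(\abul)\subseteq S$, I must show $g(\xstar)\subseteq S$. Since $f$ is idempotent it fixes $S$ elementwise, so $g(\abul)\subseteq S$ says exactly that $f(g(a))=g(a)$ for each black $a$; thus the homomorphism $h:=f\circ g$ agrees with the embedding $g$ on $\abul$. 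Because $g(y_i)\in S$ is equivalent to $f(g(y_i))=g(y_i)$, the whole corollary reduces to proving $h(y_i)=g(y_i)$ for $i=1,2$.

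I would then feed the incomparabilities of $K$ (read from its diagram) into Theorem~\ref{thmmain}. Each star $y_i$ should sit strictly between a meet $\bar c_i:=g(a_i)\wedge g(b_i)$ and one of its prongs $g(a_i)$, for a suitable incomparable pair $a_i\parallel b_i$ of black elements; note that $\bar c_i\in S$ since $S$ is a sublattice. By Theorem~\ref{thmmain}\eqref{thmmaina} the interval $[\bar c_i,g(a_i)]_L$ is a chain, and as $g$ is an order embedding it maps the corresponding interval of $K$ into this chain, so $g(y_i)$ is an interior point of a chain whose endpoints lie in $S$. By Theorem~\ref{thmmain}\eqref{thmmainb}, in the slim \emph{semimodular} lattice $L$ every such interior point, in particular $g(y_i)$, is meet-reducible, and the two prongs at $\bar c_i$ leave in orthogonal normal slopes. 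This is the structural information that a general sublattice need not see and that makes $P(9,2)$ a genuine feature of the $\mathcal C_1$-geometry rather than a formal consequence of $S$ being closed under the operations.

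The heart of the proof is the retract step, which I would run through the meet-witness provided by meet-reducibility. By semimodularity, the meet-reducible element $g(y_i)$ has, besides its upper cover $p_i$ on the chain toward $g(a_i)$, a second upper cover $w_i$ lying off that chain, and two distinct covers of an element meet in that element, so $g(y_i)=p_i\wedge w_i$ in $L$. Applying the endomorphism $f$ and using that it preserves meets gives $f(g(y_i))=f(p_i)\wedge f(w_i)$; hence, once I show $p_i,w_i\in S$, idempotency yields $f(g(y_i))=p_i\wedge w_i=g(y_i)$, as required. The on-chain cover $p_i$ is unproblematic (it is $g(a_i)$ when $g(y_i)$ sits immediately below the prong, and otherwise is pinned by the same reasoning applied higher up). The delicate point is to verify $w_i\in S$: here the orthogonal-normal-slope clause of Theorem~\ref{thmmain}\eqref{thmmainb} is what positions $w_i$, forcing it in the $P(9,2)$ configuration to be (a join or meet of) images of black elements, hence a member of $S$. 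I expect the cleanest realization to exhibit, for each $i$, a copy of the $P(8,1)$-diagram embedded in $g(K)$ whose black elements lie in $S$ and whose single star is $y_i$, and then to quote Corollary~\ref{cor:p81} twice; the ninth element and the second star of $P(9,2)$ are precisely what make these two sub-embeddings available.

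The main obstacle is this location of the witnesses $w_i$ inside $S$. An idempotent endomorphism may legitimately contract a chain whose endpoints it fixes, so the fixedness of $g(y_i)$ cannot follow from the chain structure alone; everything rests on having an off-chain meet-partner that already lies in $S$. Producing that partner is exactly where the normal-slope geometry of Figure~\ref{figprt} must be used in detail, together with a check that the two stars do not obstruct each other's witnesses. Once the witnesses are pinned in $S$, writing $g(y_i)=p_i\wedge w_i$ and applying $f$ completes the argument.
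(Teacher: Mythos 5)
Your opening reduction---that it suffices to prove $f(g(y_i))=g(y_i)$ for the retraction $f$ with image $S$---is exactly how the paper begins, but the mechanism you propose for forcing these fixed points has a genuine gap. You want to write $g(y_i)=p_i\wedge w_i$ where $p_i,w_i$ are upper covers of $g(y_i)$ in $L$, and then show $p_i,w_i\in S$. Nothing controls these covers: the embedding $g$ is not cover-preserving, and between $\bar c_i$ and $g(a_i)$ the ambient lattice $L$ may interpolate arbitrarily many elements that lie neither in $g(K)$ nor in the sublattice generated by $g(\abul)$; a retraction is free to move such elements. Theorem~\ref{thmmain}\eqref{thmmainb} constrains the slopes of the chains $[c,a]$, $[c,b]$ and makes their interior elements meet-reducible, but it says nothing about where the off-chain cover $w_i$ sits relative to $g(K)$, so there is no way to conclude that $w_i$ is ``a join or meet of images of black elements.'' The same objection hits $p_i$: your ``same reasoning applied higher up'' is an induction in which every step carries the identical unresolved obligation and which has no base case. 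Your fallback---exhibiting two embedded copies of the $P(8,1)$ configuration and quoting Corollary~\ref{cor:p81} twice---is likewise unsubstantiated: the four black elements of each such copy would themselves have to be known to lie in $S$, whereas only the black elements of the nine-element lattice are given to be in $S$; and in fact the paper's proof of this corollary is not two invocations of $P(8,1)$ but needs a genuinely new preliminary step.

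What actually closes the argument in the paper is geometry, not an algebraic meet-witness. After the same reduction, the paper reuses the entire apparatus of the proof of Corollary~\ref{cor:p81} (this is the point of \eqref{pbx:lLwhD}): the key lemma \eqref{pbx:nSkmt}, the cone characterization \eqref{eq:kszBfhtlR}, and a case distinction on whether certain $f$-images are incomparable. The fixed point is obtained by showing that $y':=f(g(y))$ is forced to lie simultaneously on two \emph{orthogonal} rays of normal slope (as with $\nwray b$ and $\swray c$ in Case~\ref{rcase:zpt}), and two orthogonal rays meet in at most one point, namely $g(y)$ itself; the remaining configurations are excluded by contradiction. For $P(9,2)$ the paper additionally proves, via an upper-cone argument, that $\set{x',y'}\neq\set{a,b}$ before running that machinery. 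None of this can be replaced by locating covers of $g(y_i)$ inside $S$, which is precisely the step your proposal leaves open.
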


\begin{proof}[Proof of Corollary \ref{cor:p81}] 
With $\abp(K,\abul,\xstar):=P(8,1)$, let $S$ be a retract of a slim semimodular lattice  $L$, and let $g\colon K\to L$ be an embedding as in Definition~\ref{def:pRpC}. We can assume that $g$ is the inclusion map. Then $K$ is a sublattice of $L$ and $\abul\subseteq S$; we need to show that $y\in S$. Convention \ref{conv:fxd} applies.
Pick a retraction map $f\to S$. For convenience, we will write $a',b', \dots, z'$ instead of $f(a)$, $f(b)$,  \dots, $f(z)$. Of course, we can drop the apostrophe at black-filled elements, for example, $a'=a$. The properties of $f$ like  $u\leq v \Rightarrow u'\leq v'$ will frequently be used without much explanation.
For $u\in L$, if there is a ray (as a geometric object) with a normal slope and with initial point $u$ such that $u$ is not the only element of $L$ adjacent to this ray, then we denote this ray by 
\begin{equation}
\nwray u,\quad  \neray u, \quad \swray u, \quad\text{or}\quad \seray u
\label{eq:raYdef}
\end{equation}
depending on its slope and ``up or down'' orientation. 
For example, in $L$ of Figure~\ref{figmfc}, $v_1$ is on $\nwray u$  (we will write  $v_1\in\nwray u$  to denote this adjacency), $u\in\seray{v_1}$,   $b\in\neray c$, and $c\in\swray b$.

\begin{figure}[htb]
\centerline
{\includegraphics[scale=1.0]{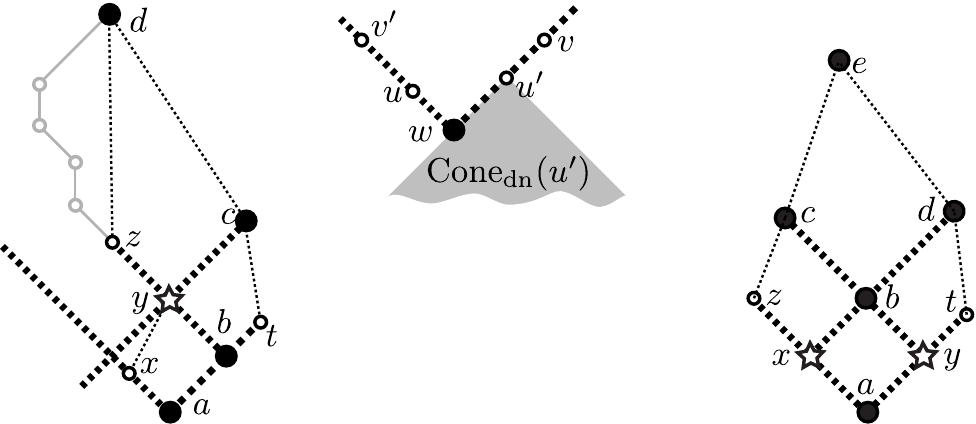}}     
\caption{Proving, from left to right, Corollary \ref{cor:p81},  \eqref{pbx:nSkmt}, and Corollary \ref{cor:p92}}
\label{figspr}
\end{figure}

By  Cz\'edli~\cite[Proposition 5.1]{czg-diagr},  we can assume that the left-right orientation of Figure~\ref{figspr} is correct since otherwise we can reflect the diagram across a vertical axis. Applying Theorem~\ref{thmmain} to $a=x\wedge t$, $b=z\wedge t$, and $y=z\wedge c$,  we obtain that 
\begin{equation}
x\in \nwray a,\spc b,t\in\neray a, \spc y,z\in\nwray b, \spc c\in\neray y;
\label{eq:smrRm}
\end{equation}
see the left side of Figure~\ref{figspr}. In this figure, the dotted lines are not for edges. Although, say, the segment of $\nwray b$ between $b$ and $z$ consists of edges by Theorem~\ref{thmmain}, it may happen that no edge lies on, say, the line segment connecting $c$ and $d$. However, the line segments indicate the ordering, the thick ones are of normal slopes, and each of the thin ones is precipitous or is of a normal slope.

We claim the following, which is the key idea of the proof. 
\begin{equation}
\parbox{8cm}{If $u\parallel v$ in $L$, $w:=u\wedge v\in S$, $u$ is to the left of $v$, and $u'\parallel v'$, then $u'\in\nwray w$, $v'\in\neray w$, and both $[w,u']$ and $[w,v']$ are chains of normal slopes.}
\label{pbx:nSkmt}
\end{equation}
To show this, observe that $w=w'=(u\wedge v)'=u'\wedge v'$. So, in the sense of \eqref{pbx:rmT}, both $w=u\wedge v$ and $w=u'\wedge v'$ are \bdia-regular meets by Theorem~\ref{thmmain}. In particular,  $[w,u']$ and $[w,v']$ are chains of normal slopes. Hence, it suffices to exclude that 
$u'\in\neray w$ and $v'\in\nwray w$. For the sake of contradiction, suppose that $u'\in\neray w$ and $v'\in\nwray w$, as in the middle of  Figure~\ref{figspr}. In the figure, the thick dotted rays are geometric half-lines and, say, the line segment from $w$ to $v'$ represents a  chain of a normal slope. Since $u$ is to the left of $v$ and $w=u\wedge v$ is a \bdia-regular meet, $u\in\nwray w$ and $v\in\neray w$. In the figure, the light-grey area indicates $\dncone {u'}$.
Since the dotted thick rays are of normal slopes, $\dncone {u}\cap \dncone {u'}= \dncone w$. Combining this equality with \eqref{eq:kszBfhtlR}, we obtain that $u\wedge u'=w$. Hence, 
$u'=u'\wedge u'=u'\wedge u''=(u\wedge u')'=w'=w$. On the other hand, $w\leq v$ yields that $w=w'\leq v'$. Thus, $u'=w\leq v'$ contradicts the assumption that $u'\parallel v'$, completing the proof of \eqref{pbx:nSkmt}.

Let us mention at this pont that, with self-explanatory changes, 
\begin{equation}
\parbox{6cm}{all what we have done in this proof so far will be needed in the next proof.}
\label{pbx:lLwhD}
\end{equation}

We need to show that $y'=y$ since this would mean that $y\in S$.
There are two cases discussed below; we are going to show that the first of these cases gives  the required $y'=y$ while the second one cannot occur since it leads to contradiction.

\begin{rcase}\label{rcase:zpt} We assume that $z'\parallel t'$.
Applying \eqref{pbx:nSkmt} with $(u,v,w):=(z,t,b)$, we obtain that
$[b,z']$ is a chain of a normal slope, $z'\in \nwray b$ and $t'\in\neray b$.  Since $z'\in \nwray b$ and $t\in\neray b\setminus\set b$, we have that $t\notin \dncone {z'}$. This fact and \eqref{eq:kszBfhtlR} give that $z'\not\geq t$, implying that $z'\not\geq c$. Thus, either $z'\parallel c$ or $z'<c$.  
However, if $z'<c$, then  $t'\leq c'=c$ leads to $d=d'=(z\vee t)'=z'\vee t'\leq c$, which is a contradiction. 
Therefore, $z'\parallel c$. Then $y'=(z\wedge c)'=z'\wedge c'=z'\wedge c$, so $y'=z'\wedge c$ is a \bdia-regular meet. By Theorem~\ref{thmmain},  
\begin{equation}
\text{either $y'\in \seray c$, or $y'\in \swray c$.}
\label{eq:twzFrm}
\end{equation}
Since $[b,z']$ is a chain of a normal slope and $b\leq y\leq z$ implies that $b=b'\leq y'\leq z'$, we obtain that $y'\in\nwray b$. 
The rays $\nwray b$ and $\seray c$ are parallel. The line segment between $y$ and $c$ is orthogonal to these parallel rays, and $y\in\nwray b$. Hence the geometric distance of $\nwray b$ and $\seray c$ is the distance of $y$ and $c$, which is positive since $y\neq c$. Thus, $\nwray b\cap \seray c=\emptyset$. This fact and $y'\in\nwray b$ excludes that $y'\in \seray c$, whereby \eqref{eq:twzFrm} implies that $y'\in \swray c$. So $y'$ belongs to both $\nwray b$ and $\swray c$. But these two orthogonal rays only have one point in common, which is $y$. Therefore $y'=y$, as required. This completes the analysis of Case~\ref{rcase:zpt}.
\end{rcase}

\begin{rcase}\label{rcase:hsnLt} We assume that $z'$ and $t'$ are comparable. Since their join and meet are $d=d'$ and $b=b'$, we have that $\set{z',t'}=\set{d,b}$. But $c=c'\geq t'$ since $c\geq t$, whereby $t'\neq d$ and we conclude that $z'=d$ and $t'=b$.  
There are two subcases 
to consider.

First, assume that $x'\parallel b$. Since $b'=b$, \eqref{pbx:nSkmt} 
gives that $x'\in\nwray a$. Let $C'$ be a maximal chain in the interval $[z,d]$, and take the chain $C:=[a,b]\cup [b,z]\cup C'$; this is indeed a chain since $[a,b]$ and $[b,c]$ are chains (of normal slopes) by Theorem~\ref{thmmain}. Now $b$ and every element of $[a,b]\setminus\set b$ is strictly on the right of the geometric ray $\nwray a$. (``Strictly on the right of'' means that ``on the right of but not belonging to''; the meaning of ``strictly on the left of'' is analogous.)  Since every edge of the subchain $C\cap[b,d]$ is precipitous or of a normal slope, every element of $C\cap[b,d]$ is strictly on the right of $\nwray a$, as the figure shows. Therefore,
\begin{equation}
\text{every element of $C\setminus \set a$ is strictly on the right of $\nwray a$.}
\label{eq:tSdzjbr}
\end{equation}
Let $L':=[a,d]$, and observe that $a\leq x\leq d$ gives that $a=a'\leq x'\leq d'=d$, that is, $x'\in L'$. Now $C$ is a maximal chain of the planar lattice $L'$, $b$ is on the left of $C$ (since $b\in C$), 
and $x'\in\nwray a$ combined with \eqref{eq:tSdzjbr} yield that $x'$ is also on the left of $C$.  It is visually clear and it has rigorously been proved in Kelly and Rival~\cite[Proposition 1.4]{kellyrival} that the elements on the left of a maximal chain of a planar lattice form a (convex) sublattice. Consequently, 
\begin{equation}
y'=(x\vee b)'=x'\vee b'=x'\vee b\text{ is on the left of $C$}.
\label{eq:syzmfFrm}
\end{equation}
Since $z\in\nwray y\setminus\set y$, $c\in\neray y\setminus y$, and every  edge of $C'$ is precipitous or of a normal slope, it follows that, again in $L'$, 
\begin{equation}
c \text{ is strictly on the right of } C.
\label{eq:srzBmrrncV}
\end{equation}
Finally, using that $z'=d$, we obtain that $y'=(z\wedge c)'=z'\wedge c' = d\wedge c=c$. Hence, $y'=c$, \eqref{eq:syzmfFrm}, and \eqref{eq:srzBmrrncV} simultaneously hold, which is a contradiction proving that the subcase $x'\parallel b$ cannot occur. 

As the second subcase, now we assume that  $x'$ and $b$ are comparable. Since $a=a'=(x\wedge b)'=x'\wedge b'=x'\wedge b$, the just-mentioned comparability yields that $x'=a$. Using this equality and $t'=b$, we obtain that 
$c=c'=(x\vee t)'=x'\vee t'=a\vee b=b$, which is a contradiction excluding the second subcase. Therefore, Case~\ref{rcase:hsnLt} cannot occur.
\end{rcase}

We have seen that Case~\ref{rcase:zpt} implies the required $y'=y$ while  Case~\ref{rcase:hsnLt} cannot occur. Thus, the proof of Corollary~\ref{cor:p81} is complete.
\end{proof}

\begin{proof}[Proof of Corollary \ref{cor:p92}] 
The relevant illustration is on the right of Figure~\ref{figspr}. 
After recalling \eqref{pbx:lLwhD}, first we show that $\set{x',y'}\neq\set{a,b}$. Suppose the contrary. Then, since $x$ and $y$ play a symmetrical role, we can assume that $x'=a$ and $y'=b$. Since $t'\geq y'=b$, \eqref{eq:kszBfhtlR} gives that $t'\in \upcone b$. But $\upcone b$ is in the (topological) interior of $\upcone a$, and both $\upcone a$ and $\upcone b$ are formed by rays of normal slopes. Hence $t'\in\upcone b$ implies that 
$a\notin \swray{t'}$ and $a\notin \seray{t'}$. Thus, Theorem~\ref{thmmain} excludes that $a=z'\wedge t'$ such that $z'\parallel t'$. 
The rest of the proof uses the same ideas as the proof of Corollary~\ref{cor:p81}; this version of the paper leaves the details to the reader.
\end{proof}

\begin{figure}[htb]
\centerline
{\includegraphics[scale=1.0]{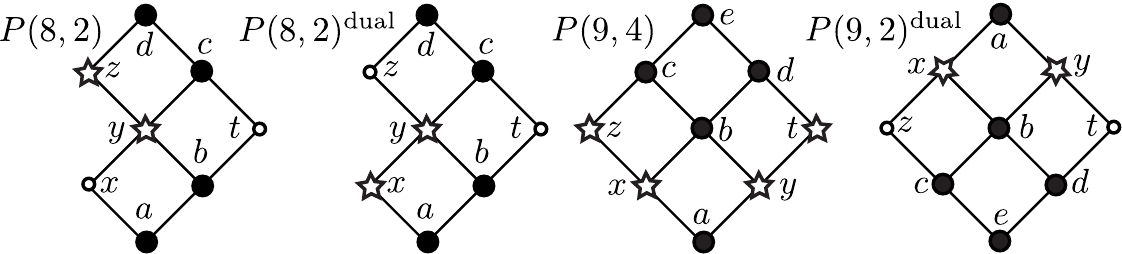}}     
\caption{Some absorption properties that, in general, do not hold for the retracts in a slim semimodular lattice
}\label{fignop}
\end{figure}

An absorption property $\abp(K,\abul,\xstar)$ becomes stronger if $\xstar$ is replaced by a larger subset of $K$. 
Note that different choices of $\xstar$ may give equivalent properties.
For example, it follows by symmetry that in case of $P(9,4)$ of Figure~\ref{fignop}, we would obtain an equivalent property if we changed $\set{x,y,z,t}$ to  $\set{x,y,z}$ or $\set{x,y,t}$. 
Therefore, the following example implies that  none of Corollaries \ref{cor:p81} and \ref{cor:p92} can be strengthened by taking a larger $\xstar$.

\begin{example}\label{ex:mpSlw}
None of the absorption properties $P(8,2)$, $P(8,2)^{\textup{dual}}$, $P(9,4)$, and $P(9,4)^{\textup{dual}}$
given in Figure~\ref{fignop} holds for for the retracts of all slim semimodular lattices.
\end{example}

\begin{figure}[htb]
\centerline
{\includegraphics[scale=1.0]{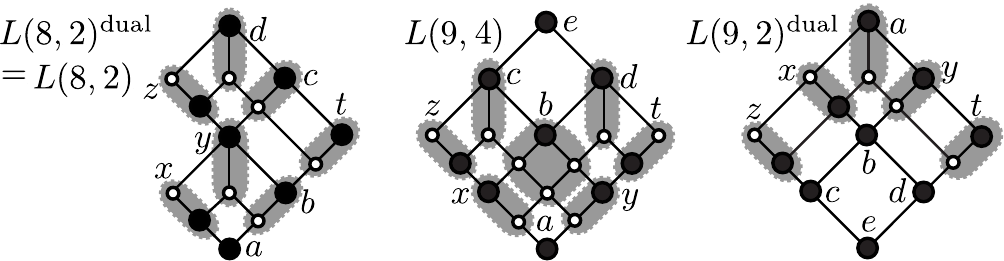}}     
\caption{To the proof of Example \ref{ex:mpSlw}
}\label{figprn}
\end{figure}

\begin{proof} Consider the lattices $L(8,2)=L(8,2)^{\textup{dual}}$, $L(9,4)$, and $L(9,4)^{\textup{dual}}$ in Figure~\ref{figprn}. They are slim semimodular lattices by (the last sentence of) \eqref{pbx:strthm}. For each of these lattices, define a map by the rule that $u\mapsto u$ if $u$ is a black-filled element, and $u\mapsto$ the unique black-filled element in the same grey ``oval cog-wheel'' otherwise. It is straightforward to check that this map is a retraction. Hence, we get an example showing that the corresponding absorption property fails.
\end{proof}

Finally, we note that the sublattice $S=\set{a,b,c,d,y}$  of the slim semimodular lattice (denote it now by $K$) on the left of Figure~\ref{figprt} satisfies $P(8,1)$ and $P(9,2)$ but 
$S$ is not a retract of $K$. Hence, Corollaries \ref{cor:p81} and \ref{cor:p92} only  give necessary conditions but not a characterization of the retracts of slim semimodular lattices.
In fact, we do not know such a characterization.

\end{document}